\newcommand{\pr}{\mbox{\sf P}}
\newcommand{\ex}{{\bf\sf E}}               
\newcommand{\var}{\mbox{\sf Var}}
\newcommand{\bP}{{\bf P}}               
\newcommand{\bu}{{\bf u}}               
\newcommand{\bb}{{\bf b}}               
\newcommand{\be}{{\bf e}}
\newcommand{\ba}{{\bf a}}
\newcommand{\al}{\alpha}                
\newcommand{\indic}{\mathbb{I}}
\newtheorem{thm}{Theorem}
\newtheorem{pro}{Proposition}
\newtheorem{rem}{Remark}
\newtheorem{ass}{Assumption}
\begin{document}

\title{
On Optimal Portfolios of Dynamic Resource Allocations
}

\date{February 24, 2017}

\author{Yingdong Lu, Siva Theja Maguluri, Mark S. Squillante\\Chai Wah Wu
\thanks{Y.~Lu, M.S.~Squillante and C.W.~Wu are with the Mathematical Sciences Department at the IBM Thomas J.\ Watson Research Center,
        {\tt\footnotesize \{yingdong,mss,cwwu\}@us.ibm.com};
	S.T.~Maguluri is with the H.\ Milton Stewart School of Industrial and Systems Engineering at the Georgia Institute of Technology,
        {\tt\footnotesize siva.theja@gatech.edu}}%
}
\renewcommand\footnotemark{}

\maketitle

\begin{abstract}
We consider the optimal allocation of generic resources among multiple generic entities of interest over a finite planning horizon,
where each entity generates stochastic returns
as a function of its
resource allocation
during each period.
The main objective is to maximize the expected return while at the same time
managing risk to an acceptable
level for each period.
We devise a general solution framework and establish how to obtain the optimal dynamic resource allocation.
\end{abstract}


\section{Introduction}
The trade-offs among risks and returns are central and fundamental to the planning and management of any collection of generic entities of interest to an organization.
At any given time, the organization may support various entities under its control with generic resources of interest.
The risks and returns associated with each entity as a function of the amount of allocated resources differ from one entity to the next, as well as across entities.
These risks and returns also evolve over time according to the dynamics of each entity, the dynamics across entities, and the dynamics of exogenous factors such as environmental changes.
Meanwhile, the organization seeks to optimize its own objectives within the context of various restrictions on what portfolio of resource allocation decisions and actions can be made and when.
A commonly used objective is to maximize the expected returns of a collection of entities of interest to the organization over a given planning horizon,
while simultaneously maintaining the risk exposure of its portfolio of resource allocation decisions and actions, usually represented by the variability of returns, within acceptable levels.
To ensure all objectives are achieved and restrictions are met, the decisions on resource allocations to each entity are periodically reviewed and adjusted over time.

This general dynamic resource allocation problem prominently arises for different types of entities across a broad spectrum of application domains.
Examples include applications in cybersecurity, pharmaceuticals, software products, and business management.
For cybersecurity applications,
the entities can represent different computing devices in distinct states of cybersecurity vulnerability;
the resources can include various forms of financial investments, people working on system security, and various cybersecurity applications (e.g., patch);
and the decisions can concern the amount of resources to be allocated to each device;
an analogous set of entities, resources and decisions exists for other forms of computer system management which may also include reliability and performance.
In the pharmaceutical industry,
the entities can represent research projects in different stages of development and trials;
the resources can include various forms of financial investments, scientists and researchers working on the projects, and various pharmaceutical compounds and materials;
and the decisions can concern the amount of resources to be allocated to each project.
Turning to the software industry for yet another example,
the entities can represent product offerings and releases of versions of product offerings;
the resources can include various forms of financial investments, software architects and programmers working on the products, and various equipment and infrastructure support;
and the decisions can concern the amount of resources to be allocated to each product.
Finally, as a general business management application,
the entities can represent different areas of the business organization such as development, marketing and sales;
the resources can include various forms of financial investments, people working in the areas, and various infrastructure support;
and the decisions can concern the amount of resources to be allocated to each area.
In each of these application domains, the organization may want to weight the benefits differently from one time period to the next such as
preferring long-term benefits even at the expense of short-term benefits or preferring short-term benefits even at the expense of long-term benefits.

Motivated by these real-world applications,
we consider a mathematical abstraction of the dynamic resource allocation problem and devise a general solution framework.
In particular, we mathematically model the resources and returns associated with every entity as quantities that can be measured in terms of certain forms of currency
which facilitate the interactions among different entities,
where this abstract notion of currency represents any combination of different types of resources such as financial investments, human resources, equipment, materials, and so on.
This mathematical abstraction also includes a fundamental connection between entities and financial instruments, even though they are very different from traditional financial instruments,
which leads to the representation of the returns from entities as random variables where such returns are functions of the amount of resources allocated to the entities and
evolve over time according to the intrinsic dynamics associated with the resource allocations and entities.
The allocation of resources are reviewed and adjusted over time to achieve maximum benefit as uncertainty is realized and information is updated.
In addition, at each time period, constraints are imposed on the variance of the return random variables such that only allocations satisfying these restrictions are deemed feasible.

Our mathematical models and analysis have certain degrees of resemblance to multi-period mean-variance models in the mathematical finance literature; see, e.g., \cite{LiChanNg,LiNg,ZhouLi}.
This includes an additional entity, with little or no risk, to serve as a reference point for the returns of the original set of entities,
as is standard in the literature~\cite{LiChanNg}.
In our context, the additional (last) entity can be cash or a Treasury note, thus representing an option that we do not need to invest all of the resources all of the time.
Alternatively, in other instances of our models, the additional (last) entity can be a benchmark entity that represents standards against which every entity will be evaluated.
There are also important connections between the resource allocations in our models and the investments in classical portfolio optimization, though with differences in semantics,
where there is an implicit assumption in our models that the returns from resource allocations at the end of one period are converted into the amount of resources available at the start of the next period.

At the same time, however,
there are significant differences between our models and those in the mathematical finance literature.
First, it is typical for the risk constraints to only be imposed on the terminal value in mathematical finance models.
This differs from our allocation of resources to entities together with their associated financial assets, where such operational aspects of the problem have a much larger impact
on the planning and management of entities and their returns, which in turn require that the associated risk factors be monitored and controlled at a much higher frequency.
Second, there appears to be an issue with a couple of key papers in the field~\cite{LiChanNg,LiNg} where, under the same model that we consider,
the amount of investment funds available from one period to the next are not properly restricted to the amount of wealth obtained up to that point in time;
i.e., an unlimited amount of ``borrowed'' funds are allowed at no cost.
This differs from our allocation of resources to entities, where allocation beyond current capacity can be much more difficult to do and therefore we explicitly enforce
the constraint of the problem formulation on the investment funds available in each period.
In addition, our models and analysis support the general case where borrowing is allowed at a given cost rate.
Our models and analysis further support the inclusion of weights to differentiate the contributions from one time period to the next.

We note that the research literature includes other mathematical models and methods to address the problem of dynamic resource allocation.
In particular, there have been several attempts to directly apply portfolio management models and methods in the planning and management of scientific and development projects.
One such approach~\cite{bardhan,gustaffson} is based on mathematical programming, including stochastic programming.
While some of these models and methods are quite powerful at incorporating constraints,
it is not surprising that one of the key features often ignored in these previous studies concerns the correlations among entities,
which usually causes the mathematical programming approach to become extremely complex and renders its computational complexity prohibitive~\cite{gustaffson}.
In contrast, as we will show, the mathematical models and methods devised in this paper can readily support many aspects of the various sources of uncertainty, including correlations among entities and their evolution over time.
Another approach to addressing the problem of project management is based on decision science methods, mostly involving various structured decision trees.
This approach has important limitations when the parameter and state spaces that have to be discretized become very large.
Our interests in this paper concern large-scale optimal dynamic allocation of resources among multiple entities of interest over a long-run planning horizon.

The remainder of the paper is organized as follows.
Our mathematical models are presented in Section~\ref{sec:models}, followed by our mathematical analysis in Section~\ref{sec:analysis}.
Extensions of our general solution framework are discussed in Section~\ref{sec:extensions}, followed by concluding remarks.

\section{Mathematical Models}
\label{sec:models}
Consider the allocation of a set of common resources among $n+1$ general entities over a discrete-time planning horizon comprising $T$ periods.
For each period $t=1, \ldots, T$, the amount of resources allocated to entity $i=1, \ldots, n+1$ is denoted by $u_i^t$.
Let $x_t$ denote the total return at the end of period $t$ where, with slight abuse of notation, such returns are implicitly converted into the amount of available resources at the start of period $t+1$.
The total amount of resources available at the beginning of the horizon is given by $x_0$, and thus $\sum_{i=1}^{n+1} u_i^1 = x_0$.
Within each period, random returns are generated proportional to the amount of resources allocated to each entity.
More specifically, let us define
\begin{itemize}
	\item $e_t := $ return of the $(n+1)$-st entity during time period $t$ per unit of resource allocated;
	\item $e^t_i := $ return of the $i$-th entity during time period $t$ per unit of resource allocated;
\end{itemize}
where these random variables (r.v.s) are independent over time.
Under these assumptions, $x_t$ satisfies the dynamics
\begin{align}
x_t &= e_t x_{t-1} + \sum_{i=1}^n (e^t_i -e_t) u_i^t , \qquad t=1, \ldots, T,
\label{eq:dynamics}
\end{align}
where the $(n+1)$-st entity is used as a point of reference and $u_{n+1}^t = x_{t-1} - \sum_{i=1}^n u_i^t$ by definition.

Defining the $n$-dimensional vectors $\bP_t$ and $\bu_t$ to be
\begin{eqnarray*}
\bP_t & := & [(e^t_1-e_t), (e^t_2-e_t), \ldots, (e^t_n-e_t)]^\prime , \\
\bu_t & := & (u_1^t, \ldots, u_n^t)^\prime ,
\end{eqnarray*}
respectively,
the system dynamics can be expressed as
\begin{align}
	\label{eqn:dynamic}
x_t &= e_t x_{t-1} + \bP^\prime_t  \bu_t.
\end{align}
We next define the $(n+1)$-dimensional vector
$\be^t := ( e_t, e_1^t, \ldots, e_n^t)^\prime$
and introduce the following assumption.

\begin{ass}
\label{ass:I}
$\ex[\be^t (\be^t)^\prime]$ is positive definite for all time periods $t=1, \ldots, T$.
\end{ass}

\begin{rem}
\label{rem:I}
While $\ex[\be^t (\be^t)^\prime]$ are positive semidefinite matrices by definition, Assumption~\ref{ass:I} guarantees they are not degenerate (i.e., $\be^t=0, \; \forall t$).
Further note that our model does not impose strong distributional assumptions on the returns; we only require that they have finite second moments.
\end{rem}

The $(n+1) \times (n+1)$ matrix $\ex[\be^t (\be^t)^\prime]$ then can be expressed as
\begin{align*}
\left[ \begin{array}{cccc} \ex[e_t^2] & \ex[e_te^t_1] & \ldots  & \ex[e_te^t_n] \\ \ex[e_1^te_t] & \ex[(e^t_1)^2] & \ldots  & \ex[e_1^te^t_n] \\ && \ldots & \\
\ex[e_n^te_t] & \ex[e_n^te^t_1] & \ldots  & \ex[(e^t_n)^2] \end{array}
\right] .
\end{align*}
Assumption~\ref{ass:I} implies that
\begin{align*}
&\left[ \begin{array}{cc} \ex[(e_t)^2] & \ex[e_t\bP^\prime_t] \\ \ex[e_t\bP_t] & \ex[\bP_t\bP^\prime_t] \end{array}\right] = \\ &  \left[\begin{array}{cccc} 1 & 0 &\ldots & 0 \\  -1 & 1 &\ldots & 0 \\ \ldots & \ldots &\ldots & \ldots \\  -1 & 0 &\ldots & 1 \end{array}
\right]\ex[\be^t (\be^t)^\prime] \left[\begin{array}{cccc} 1 & -1 &\ldots & -1 \\  0 & 1 &\ldots & 0 \\ \ldots & \ldots &\ldots & \ldots \\  0 & 0 &\ldots & 1 \end{array}
\right]
\end{align*}
is also positive definite.
In addition, we have
\begin{eqnarray}
\ex[\bP_t\bP^\prime_t] & > & 0, \qquad \forall t, \label{eqn:condition1} \\
\ex[(e_t)^2] - \ex[e_t\bP^\prime_t]\ex^{-1}[\bP_t\bP^\prime_t]\ex[e_t\bP_t] & > & 0, \qquad \forall t. \qquad \label{eqn:condition2}
\end{eqnarray}

Our resource allocation goal is to maximize the expected return over time, while maintaining the variance of return within an acceptable range.
This is a natural model for many different problem instances
across a wide variety of applications.
The progress of resource allocations are often closely monitored and periodically reviewed so that they can be adjusted to yield the best return.
Hence, the returns and risks of each period need to be factored into the decision making process as forms of operational measurements.

Mathematically, our resource allocation planning can be formulated as the following stochastic optimization problem
\begin{align}
	\label{eqn:orignal_formulation}
\max_{(\bu_1,\ldots,\bu_T)} \quad & \sum_{t=1}^Tw_t\ex[x_t]\\
\mbox{s.t.} \quad  &  \var[x_t]\le \al_t, \qquad t=1, \ldots, T , \label{eqn:original_constraints}
\end{align}
where the weights $w_t$ allow differentiation among the contributions from distinct periods,
and the pre-specified risk tolerances $\al_t$
reflect levels of risk deemed to be acceptable.
Let us denote the set of optimal allocations as $\Pi_1(\mathbf{w},\mathbf{\alpha})$.

This type of portfolio optimization problem is often investigated through related Lagrangian relaxations.
More specifically, solving this problem is equivalent to solving
\begin{align}
	\label{eqn:second_formulation}
\max_{(\bu_1,\ldots,\bu_T)} \quad & \sum_{t=1}^Tw_t\ex[x_t]-y_t \var[x_t] 
\end{align}
for some vectors $\mathbf{y}$.
Denote the set of optimal allocations by $\Pi_2(\mathbf{w},\mathbf{y})$.
This problem remains challenging to solve because of the variance term.
Since variance is not linear, and since $x_t$ evolves according to \eqref{eq:dynamics}, the variance term here couples the decisions across the time slots.
However, as in \cite{LiChanNg,LiNg}, the problem can be solved by considering a simpler set of problems that also renders the optimal solution of \eqref{eqn:second_formulation}.
To this end, we consider the general problem
\begin{align}
		\label{eqn:equivalent_formulation}
\max_{(\bu_1,\ldots,\bu_T)} \quad & \sum_{t=1}^T\ex [a_tx_t-b_t x_t^2].
\end{align}
Let us denote the set of optimal allocations as $\Pi_3(\ba,\bb)$.
We henceforth focus on the solution of \eqref{eqn:equivalent_formulation} with respect to $a_t$ and $b_t$ as a result of the following proposition.
\begin{pro}
	\label{pro:equivalence}
Any solution to the problem \eqref{eqn:second_formulation} is also a solution to the problem \eqref{eqn:equivalent_formulation} for some vectors $\ba$ and $\bb$,
i.e., $\Pi_2(\mathbf{w},\mathbf{y})\subseteq \Pi_3(\ba,\bb)$ for some vectors $\ba$ and $\bb$.
\end{pro}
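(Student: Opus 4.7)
My plan is to exploit the identity $\var[x_t] = \ex[x_t^2] - (\ex[x_t])^2$ to rewrite the Lagrangian objective in \eqref{eqn:second_formulation} in a form that matches \eqref{eqn:equivalent_formulation}, up to a linearization of the nonlinear term $(\ex[x_t])^2$ evaluated at the optimal allocation. The key idea is that while $(\ex[x_t])^2$ is not an expectation of a function of $x_t$, at a fixed allocation it is just a scalar, so I can absorb its derivative into the linear coefficient $a_t$ and check that the resulting problem is still optimized by the same $\bu^*$.

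Concretely, let $\bu^{*} = (\bu_1^{*}, \ldots, \bu_T^{*}) \in \Pi_2(\mathbf{w},\mathbf{y})$ and let $m_t^{*} := \ex[x_t]$ when $x_t$ is generated under $\bu^{*}$ via \eqref{eqn:dynamic}. Set
\begin{align*}
a_t \;:=\; w_t + 2 y_t m_t^{*}, \qquad b_t \;:=\; y_t, \qquad t=1,\ldots,T.
\end{align*}
For any admissible $\bu$, write $m_t(\bu) := \ex[x_t]$ under $\bu$. Substituting $\var[x_t] = \ex[x_t^2] - m_t(\bu)^2$ into \eqref{eqn:second_formulation} and comparing term-by-term with \eqref{eqn:equivalent_formulation}, I would verify the algebraic identity
\begin{align*}
\sum_{t=1}^T \ex[a_t x_t - b_t x_t^2] \;-\; \sum_{t=1}^T \bigl(w_t \ex[x_t] - y_t \var[x_t]\bigr) \;=\; -\sum_{t=1}^T y_t\bigl(m_t(\bu) - m_t^{*}\bigr)^2 \;+\; \sum_{t=1}^T y_t (m_t^{*})^2.
\end{align*}
The right-hand side is the sum of a constant (independent of $\bu$) and a nonpositive term that vanishes precisely at $\bu = \bu^{*}$ (assuming $y_t \ge 0$, the only regime in which the Lagrangian relaxation is meaningful, since $\al_t$ plays the role of an upper bound on the variance in \eqref{eqn:original_constraints}).

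Hence for any $\bu$,
\begin{align*}
\sum_{t=1}^T \ex[a_t x_t - b_t x_t^2] \;\le\; \sum_{t=1}^T \bigl(w_t \ex[x_t] - y_t \var[x_t]\bigr) + \sum_{t=1}^T y_t (m_t^{*})^2,
\end{align*}
with equality at $\bu=\bu^{*}$. Combined with the optimality of $\bu^{*}$ for \eqref{eqn:second_formulation}, this shows $\bu^{*}$ also maximizes the objective in \eqref{eqn:equivalent_formulation}, i.e.\ $\bu^{*} \in \Pi_3(\ba,\bb)$ with the $(\ba,\bb)$ defined above. The only real subtlety is the implicit dependence of $\ba$ on the unknown optimal mean trajectory $m_t^{*}$, which is standard in parametric mean-variance reductions of this type and is acceptable here because the proposition only asserts the \emph{existence} of vectors $(\ba,\bb)$ for each solution.
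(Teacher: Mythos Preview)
Your proof is correct and follows essentially the same approach as the paper: you choose the same coefficients $b_t=y_t$ and $a_t=w_t+2y_t\ex[x_t^{*}]$, which the paper motivates as $a_t=\partial U/\partial\ex[x_t]$ at the optimum. The paper argues by contradiction via convexity of $U$ in $(\ex[x_t],\ex[x_t^2])$, whereas you make this completely explicit through the quadratic identity (which is precisely that convexity inequality, with equality since $U$ is quadratic); your direct argument is a bit cleaner, and the only cosmetic slip is the word ``precisely'' --- the penalty term vanishes whenever the mean trajectories coincide, not only at $\bu^{*}$, but that plays no role in the argument.
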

\begin{proof}
The proof of this proposition is by contradiction and similar to that of Theorem~1 in \cite{LiNg}, and thus we only provide a brief summary here.
Let $U(\cdot)$ denote the objective in \eqref{eqn:second_formulation}.
Suppose there is a sequence of optimal decisions for \eqref{eqn:second_formulation}, $\pi^\ast=(\bu^\ast_1,\ldots,\bu^\ast_T) \in \Pi_2(\mathbf{w},\mathbf{y})$,
and let $\mathbf{x}^\ast$ denote the corresponding returns. 
Set $b_t= y_t$ and $a_t=w_t+2y_tE[x^\ast_t]$ and further suppose that   $(\bu^\ast_1,\ldots,\bu^\ast_T) \notin \Pi_3(\ba,\bb)$.
Note that we selected $a_t=\frac{\partial{U}}{\partial{E[x_t]}}$ under the optimal decisions $\pi^\ast$. 
There exists a $\tilde{\pi}=(\tilde{\bu}_1,\ldots,\tilde{\bu}_T)$ that is an optimal solution of  \eqref{eqn:equivalent_formulation}.
From this fact together with the fact that $U(\cdot)$ is convex in $E[x_t]$ and $E[x^2_t]$ for all $t$,
one can conclude that using the decisions $\tilde{\pi}$ leads to a better value of $U(\cdot)$ than the decisions $\pi^\ast$, which contradicts the optimality of $\pi^\ast$.
\end{proof}

\section{Mathematical Analysis}
\label{sec:analysis}
We now present our analysis of the above stochastic optimization problem.
First, to illustrate the basic approach, we present a detailed analysis of the one-dimensional case where many quantities can be explicitly calculated.
Then, we proceed to provide our solution for the general problem, followed by extensions of our analysis to support acquisition of additional resources in each period at a cost.

\subsection{One-dimensional Case}

Consider the case of $n=1$.
From \eqref{eqn:dynamic}, the total return for any period $t=1,\ldots, T$ is given by
$$R_t = e_t x_{t-1} + P_t u_t = e_1^t u_t + e_t (x_{t-1} - u_t),$$ with $u_t$ denoting the amount of resource allocation.
The r.v.\ $e_1^t$ has first two moments $p_{1,t}$ and $p_{2,t}$, whereas the return of the second entity $e_t$ is assumed
to be riskless with mean $r_0>0$ and variance $0$.
This then renders the stochastic optimization problem
\begin{equation*}
\max_{(u_1,\ldots,u_T)} \quad \sum_{t=1}^T [a_t \mu_t - b_t \nu_t] ,
\end{equation*}
where $a_t, b_t \ge 0$ are given parameters, $\mu_t = \ex[R_t]$, and $\nu_t = \ex[R^2_t].$

Let us next consider the solution of this stochastic dynamic program.
At the beginning of period $T$, since the total amount of available resources is $x_{T-1}$,
we only need to determine the resource allocations $u_T$ for time period $T$ that maximize
$	J_T \; = \; a_T \mu_T - b_T \nu_T.$
If the total allocation is $y$, the objective can be rewritten as
\begin{align*}
J_T & = 	a_T [y p_{1,T}+r_0(x_{T-1}-y)] \\ & -  b_T [y^2p_{2,T}+ r_0^2(x_{T-1}-y)^2+2r_0p_{1,T}y(x_{T-1}-y)] .
\end{align*}
Upon taking the derivative with respect to $y$, we obtain
\begin{align*}
\frac{\partial J_T}{\partial y}	
 = & \; [a_T (p_{1,T}-r_0) + 2b_Tx_{T-1}(r_0^2-r_0p_{1,T})] \\ &- 2b_T(p_{2,T}+r_0^2-2r_0p_{1,T})y.
\end{align*}
Hence, the stationary point is given by
\begin{align*}
	y^\ast \; = \; \frac{a_T (p_{1,T}-r_0) + 2b_Tx_{T-1}(r_0^2-r_0p_{1,T})}{2b_T(p_{2,T}+r_0^2-2r_0p_{1,T})}.
\end{align*}

In contrast to~\cite{LiChanNg,LiNg}, we enforce the available resource constraint $x_{T-1}$ on allocation decisions.
More specifically, on one side when $y^\ast > x_{T-1}$, we have
\begin{align*}
	\frac{a_T (p_{1,T}-r_0) + 2b_Tx_{T-1}(r_0^2-r_0p_{1,T})}{2b_T(p_{2,T}+r_0^2-2r_0p_{1,T})} \; > \; x_{T-1}
\end{align*}
which is equivalent to
\begin{align*}
	x_{T-1} \; < \; \frac{a_T (p_{1,T}-r_0)}{2b_T(p_{2,T}-r_0p_{1,T})} .
\end{align*}
Then the stationary point is out of reach and the maximum is obtained at $x_{T-1}$.
The optimal policy therefore allocates all resources to the first entity,
thus rendering
\begin{align*}
	J_T (x_{T-1}) \; = \; a_T x_{T-1} p_{1,T}-b_T x_{T-1}^2p_{2,T}.
\end{align*}
Observe that $J_T (x_{T-1})$ has a decreasing derivative as a function of $x_{T-1}$ and
that the right derivative of $J_T (x_{T-1})$ evaluated at $$x_{T-1} = \frac{a_T (p_{1,T}-r_0)}{2b_T(p_{2,T}-r_0p_{1,T})}$$ is equal to, after some simplification,
\begin{align*}
\frac{p_{2,T}-p^2_{1,T}}{p_{2,T}-r_0p_{1,T}}a_Tr_0 .
\end{align*}
On the other hand, if $$x_{T-1} \geq \frac{a_T (p_{1,T}-r_0)}{2b_Tp_{2,T}},$$ then the optimal policy
allocates $y^\ast$ resources to the first entity,
in which case the value function bears the form
\begin{align*}
	J_T (x_{T-1})= & a_T [y^\ast p_{1,T}+r_0(x_{T-1}-y^\ast)]-b_T [(y^\ast)^2p_{2,T} \\& + r_0^2(x_{T-1}-y^\ast)^2+2r_0p_{1,T}y^\ast(x_{T-1}-y^\ast)].
\end{align*}
Straightforward calculations show that $J_T (x_{T-1})$ has a decreasing derivative with respect to $x_{T-1}$
and the left derivative at $$x_{T-1} = \frac{a_T (p_{1,T}-r_0)}{2b_T(p_{2,T}-r_0p_{1,T})}$$ is also equal to
\begin{align*}
 \frac{p_{2,T}-p^2_{1,T}}{p_{2,T}-r_0p_{1,T}}a_Tr_0 .
\end{align*}
As a result of these two cases, we can conclude that the value function $J_T (x_{T-1})$ is a concave function of $x_{T-1}$.

With $x_{T-2}$ resources available at the beginning of period $T-1$,
if the allocation to the first entity is set at $y\le x_{T-2}$, then the objective can be expressed as
\begin{align*}
	\max \; J_{T-1}=a_{T-1} \mu_{T-1} - b_{T-1} \nu_{T-1}+ \ex[J_T(x_{T-1})] .
\end{align*}
Equivalently, we have
\begin{align*}
	\max_y \quad & a_{T-1} [y p_{1,T-1}+r_0(x_{T-2}-y)] -b_{T-1}[ y^2p_{2,T-1} + r_0^2( x_{T-2}-y)^2\\ & + 2r_0p_{1,T-1}(x_{T-2}-y)] + \ex[J_T(r_0(x_{T-2}-y)+ e_1^{T-1}y)].
\end{align*}
Meanwhile, from the above analysis, we know that
\begin{align*}
	&\ex[J_T(r_0(x_{T-2}-y)+ e_1^{T-1}y)]\\  = & \ex[ \{a_T [(r_0(x_{T-2}-y)+ e_1^{T-1}y)p_{1,T} ] \\& - b_T [{(r_0(x_{T-2}-y)+ e_1^{T-1}y)}^2 p_{2,T}]\} \times  \indic\{G_{T-1}\}]\\ &+ \ex[ \{a_T [(y^\ast)p_{1,T} + r_0((x_{T-2}-y)+ e_1^{T-1}y-y^\ast)] \\ &- b_T [{(y^\ast)}^2 p_{2,T} + r_0^2 {(x_{T-2}-y+ e_1^{T-1}y-y^\ast)}^2] \}  \times \indic\{G^c_{T-1}\}]
\end{align*}
with
\begin{align*}
	G_{T-1} := \left\{ r_0(x_{T-2}-y)+ e_1^{T-1}y\le \frac{a_T (p_{1,T}-r_0)}{2b_T(p_{2,T}-r_0p_{1,T})}\right\} ,
\end{align*}
where $\indic\{Z\}$ is the indicator function for event $Z$.

The above quantity, of course, can be easily calculated for any distributions with finite first two moments $p_{1,t}$ and $p_{2,t}$ for the independent r.v.s $e_1^{t}$.
Once again, in contrast to~\cite{LiChanNg,LiNg}, we explicitly consider the constraint of available resources $x_{T-2}$ on allocation decisions.
Hence, we can repeat the process for the last period by finding the stationary point, optimally allocating the amount of resources $(x_{T-2}\wedge y^\ast)$ to the first entity,
and then exploiting the properties of the function to show that the value function remains concave.

In general, for any time period $t$, with initial availability of $x_{t-1}$ resources, the objective becomes
\begin{align*}
	\max_y \quad & a_{t} [y p_{1,t}+r_0(x_{t-1}-y)] -b_{t} [y^2p_{2,t}+r_0^2(x_{t-1}-y)^2\\ & +2r_0p_{1,t}y(x_{t-1}-y)] + \ex[J_{t+1}(r_0(x_{t-1}-y)+ e_1^{t}y)]
\end{align*}
where $\ex[J_{t+1}(r_0(x_{t-1}-y)+ e_1^{t}y)]$ can be readily calculated as demonstrated above.
Then we can obtain the critical value and the optimal policy will again be to allocate the minimum between this threshold and $x_{t-1}$.
To guarantee obtaining the critical value efficiently, which is key in the derivation for the one-dimensional case,
we will establish the preservation of the concavity of the objective for any period.
Our approach is based on inductive arguments, where the initial case has been discussed fully above.
Now, assume the concavity of the objective function holds for any period $t+1, \ldots, T$.

At the beginning of period $t$, the amount of available resources is $x_{t-1}$ such that $y\le x_{t-1}$ resources can be allocated to the first entity which yields
a total return of $$e_1^ty + J_{t+1}(e_1^ty+ r_0( x_{t-1}-y)).$$
Hence, the optimal decision for period $t$ is determined through the following problem
\begin{align}
	\max_y \quad & a_t p_{1,t} y + a_t r_0(x_{t-1}-y) - b_t p_{2,t} y^2
	 - b_t r_0^2{(x_{t-1}-y)}^2 \nonumber \\ & -2b_tr_0p_1,ty(x_{t-1}-y) + \ex[J_{t+1}(e_1^ty+ r_0(x_{t-1}-y))].
\label{obj:gen:t}
\end{align}
Taking the derivative of the objective function with respect to $y$, we obtain
\begin{align}
	& a_t(p_{1,t}-r_0) + 2b_t x_{t-1} (r_0^2 -r_0p_{1,t})  - 2b_t(p_{2,t}+r_0^2-2r_0p_{1,t})y \nonumber \\ &\qquad\qquad + \frac{\partial }{\partial y} \ex[J_{t+1}(e_1^ty+  r_0(x_{t-1}-y))].
\label{deriv:gen:t}
\end{align}
Let $y^\ast$ denote the critical value for this function.
Further, since $y^\ast$ is a function of $x_{t-1}$, let us denote by $x_{t-1}^\ast$ the solution to $y^\ast (x_{t-1})=x_{t-1}$.
Then, if $x_{t-1} < x_{t-1}^\ast$, the optimal decision for period $t$ is to allocate all $x_{t-1}$ available resources to the first entity;
otherwise, the optimal decision is to allocate $x_{t-1}^\ast$ resources to the first entity.

Hence, when $x_{t-1} < x_{t-1}^\ast$, the value function can be expressed as
\begin{align*}
	a_t p_{1,t} x_{t-1} - b_t p_{2,t} x_{t-1}^2 + \ex[J_{t+1}(e_1^tx_{t-1})] ,
\end{align*}
which is clearly a concave function of $x_{t-1}$.
Moreover, the left derivative evaluated at $x_{t-1}= y^\ast$ is given by
\begin{align}
\label{eqn:leftD}
	a_t p_{1,t} - 2 b_t p_{2,t} y^\ast + \frac{\partial }{\partial y} \ex[J_{t+1}(e_1^t y^\ast)] .
\end{align}
By definition of $y^\ast$, the above expression is equivalent to
\begin{align*}
	1- \frac{\partial }{\partial y}\ex[J_{t+1}(e_1^ty+ r_0( x_{t-1}-y))]+\frac{\partial }{\partial y}\ex[J_{t+1}(e_1^ty^\ast)] ,
\end{align*}
which tends to approach $1$ as $x_{t-1}\rightarrow x_{t-1}^\ast$.
In summary, the derivative remains positive and decreases when $x_{t-1}< x_{t-1}^\ast$.

On the other hand, when $x_{t-1}\ge x_{t-1}^\ast$, the value function and its derivative are given by \eqref{obj:gen:t} and \eqref{deriv:gen:t}, respectively.
From the definition of $x_{t-1}^\ast$, we know that the derivative is less than or equal to \eqref{eqn:leftD} and non-increasing.
Concavity is therefore preserved and the desired result follows by induction.

\subsection{General Case}
We proceed along similar lines in our development of an algorithmic framework to solve the general problem.
Starting at period $T$, with initial available resources $x_{T-1}$, the objective that is to be maximized has the following form
\begin{align*}
J_{T}  = & \ex[-b_Tx_T^2 + a_T x_T \; | \; x_{T-1}].
\end{align*}
Substituting the expression \eqref{eqn:dynamic} for $x_T$, we obtain
\begin{align*}
J_{T} 
= & \{-b_T\ex[e^2_{T}]x^2_{T-1} + a_T\ex[e_{T}] x_{T-1}\}  + \{a_T\ex[\bP^\prime_{T}]-2b_T\ex[e_{T}\bP^\prime_{T}]x_{T-1}\}\bu_{T} \\ & - b_T \bu_{T}^\prime \ex[\bP_{T}\bP^\prime_{T}] \bu_{T} ,
\end{align*}
and therefore
\begin{align*}
\frac{\partial J_{T}(\bu_{T} \;|\; x_{T-1})}{\partial \bu_{T}} = &
 a_T\ex[\bP^\prime_{T}]-2b_T\ex[e_{T}\bP^\prime_{T}]x_{T-1} -2b_T\ex[\bP_{T}\bP^\prime_{T}]\bu_{T} .
\end{align*}
Upon examining the system of equations
$$\frac{\partial J_{T}(\bu_{T} \;|\; x_{T})}{\partial \bu_{T}} \; = \; 0,$$
we derive the solution
\begin{equation}
\bu_{T}^\ast = \ex^{-1}[\bP_{T}\bP^\prime_{T}] \{ a_T/(2b_T)\ex[\bP_{T}] -\ex[e_{T}\bP^\prime_{T}]x_{T-1} \} . \label{eqn:sol1}
\end{equation}

Now, if ${\bf 1}^\prime \bu_{T}^\ast \le  x_{T-1}$ with ${\bf 1}$ being the $n$-dimensional column vector of all ones,
i.e., the initial amount of available resources is large enough to accommodate the allocation corresponding to the solution \eqref{eqn:sol1},
then $\bu_{T}^\ast$ is in fact the optimal resource allocation decision.
Otherwise, we need to introduce the constraint ${\bf 1}^\prime \bu_{T} =  x_{T-1}$, as well as a multiplier $\nu$.
Applying this to the optimization problem, we obtain
\begin{align*}
\bu_{T}   = &  \ex^{-1}[\bP_{T}\bP^\prime_{T}] c_T -\nu,  \quad {\bf 1}^\prime \bu_{T}  = x_{T-1},
\end{align*}
with $$c_T:=\frac{a_T}{2b_T}\ex[\bP_{T}]-\ex[e_{T}\bP^\prime_{T}]x_{T-1},$$ which renders
\begin{align*}
\nu &= \frac{1}{n}\left\{{\bf 1}^\prime\ex^{-1}[\bP_{T}\bP^\prime_{T}] c_T -x_{T-1}\right\}
\end{align*}
and therefore
\begin{align}
\bu_{T}^\ast = & \ex^{-1}[\bP_{T}\bP^\prime_{T}] c_T  -\frac{1}{n}\left\{{\bf 1}^\prime\ex^{-1}[\bP_{T}\bP^\prime_{T}] c_T -x_{T-1}\right\}.	\label{eqn:sol2}
\end{align}
Upon combining the solutions in \eqref{eqn:sol1} and \eqref{eqn:sol2}, we have
\begin{align}
\bu_{T}^\ast = & \ex^{-1}[\bP_{T}\bP^\prime_{T}] c_T -\frac{1}{n}\left\{{\bf 1}^\prime\ex^{-1}[\bP_{T}\bP^\prime_{T}] c_T -x_{T-1}\right\}^+ .
\label{eqn:opt_allocation_T}
\end{align}
Substituting this into
the objective function, we derive the optimal cost-to-go for a given $x_{T-1}$ to be
\begin{align*}
& J_{T}^\ast (x_{T-1}) = \\
& -b_T\Big[\ex[e^2_{T}]-2\Big(1-\frac{1}{n}A_{T}\Big)\ex[e_{T}\bP^\prime_{T}]\ex^{-1}[\bP_{T}\bP^\prime_{T}]  \times \ex[e_{T}\bP^\prime_{T}] -A_{T} \Big]x_{T-1}^2 \\
& + a_T\Big[ \ex[e_{T}]-\Big(1-\frac{1}{n}A_{T}\Big)\ex[\bP^\prime_{T}]\ex^{-1}[\bP_{T}\bP^\prime_{T}]\ex[e_{T}\bP^\prime_{T}]\Big]  x_{T-1} \\
&  + \Big(1-\frac{1}{n}A_{T}\Big) \frac{a_T^2}{4b_T} \ex[\bP^\prime_{T}]\ex^{-1}[\bP_{T}\bP^\prime_{T}]\ex[\bP_{T}]
\end{align*}
where
$$
A_{T} =  \indic\big\{ \ex^{-1}[\bP_{T}\bP^\prime_{T}] \big[c_T-{\bf 1}x_{T-1}\big] \ge 0 \big\} .
$$
Observe that the above expression for $J_{T}^\ast (x_{T-1})$ is a joint combination of two quadratic functions of $x_{T-1}$.
Furthermore, by the following known result in convex analysis (see, e.g., \cite{RockafellarBook}), $J_{T}^\ast (x_{T-1})$ remains a concave function of $x_{T-1}$.
\begin{thm}
	The solution of a convex programming problem with one knapsack constraint is convex with respect to the constrained value.
\end{thm}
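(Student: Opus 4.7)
The plan is to establish this via a direct convex-combination argument on the value function. Consider the convex program in standard form $\min_u f(u)$ subject to $g(u) \le x$, where $f$ and $g$ are convex functions and $x\in\mathbb{R}$ is the scalar ``constrained value'' parameterizing a single knapsack-type inequality. Denote the optimal value by $V(x)$; the goal is to show that $V$ is convex in $x$. (In the paper's application, $J_T^\ast(x_{T-1})$ is obtained by maximizing a concave objective subject to the linear knapsack ${\bf 1}^\prime \bu_T \le x_{T-1}$, so after a sign flip the setting fits the theorem and convexity of $V$ translates into concavity of $J_T^\ast$.)

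First I would fix two parameter values $x_1,x_2$ and corresponding optimizers $u_1^\ast, u_2^\ast$, so that $V(x_i) = f(u_i^\ast)$ and $g(u_i^\ast) \le x_i$ for $i=1,2$. For any $\lambda \in [0,1]$, define $u_\lambda := \lambda u_1^\ast + (1-\lambda) u_2^\ast$ and $x_\lambda := \lambda x_1 + (1-\lambda) x_2$. The crux is then to check feasibility of $u_\lambda$ at parameter $x_\lambda$: convexity of $g$ yields
\[
g(u_\lambda) \;\le\; \lambda g(u_1^\ast) + (1-\lambda) g(u_2^\ast) \;\le\; \lambda x_1 + (1-\lambda) x_2 \;=\; x_\lambda,
\]
so $u_\lambda$ is admissible for the program with right-hand side $x_\lambda$. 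Invoking the definition of $V$ and then convexity of $f$ gives
\[
V(x_\lambda) \;\le\; f(u_\lambda) \;\le\; \lambda f(u_1^\ast) + (1-\lambda) f(u_2^\ast) \;=\; \lambda V(x_1) + (1-\lambda) V(x_2),
\]
which is precisely convexity of $V$ on the effective domain.

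The main obstacle is interpretive rather than technical. One must be explicit that ``the solution'' refers to the optimal value as a function of the knapsack parameter (consistent with the way the theorem is applied immediately above to conclude concavity of $J_T^\ast(x_{T-1})$), and one must state the implicit hypotheses---existence of optimizers $u_i^\ast$, convexity of both objective and constraint functions, and the single-constraint scalar form. For the paper's use, $g$ is in fact linear and $f$ is quadratic concave (after sign flip, convex) in $\bu_T$ for fixed $x_{T-1}$, so the hypotheses are clearly met; hence the combined expression for $J_T^\ast(x_{T-1})$, written as an indicator-weighted patching of two quadratics in the excerpt, is concave as claimed without having to verify concavity piece-by-piece at the boundary $A_T$ switches value.
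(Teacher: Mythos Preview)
Your argument is correct and is the standard convex-combination proof that the optimal value of a convex program is convex in the right-hand side of its constraints. Note, however, that the paper does not actually prove this theorem: it is simply quoted as a known result in convex analysis with a reference to Rockafellar's \emph{Convex Analysis}. So rather than taking a different route from the paper, you are supplying a proof where the paper supplies none; what you have written is essentially the textbook justification behind the cited fact.

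One point worth tightening in your final paragraph concerns the application to $J_T^\ast(x_{T-1})$. As you state the theorem and prove it, the objective $f$ depends only on the decision $u$, while the parameter $x$ enters solely through the constraint $g(u)\le x$. In the paper's setting, however, $x_{T-1}$ appears \emph{both} in the knapsack constraint ${\bf 1}^\prime\bu_T\le x_{T-1}$ \emph{and} in the objective, since
\[
J_T(\bu_T,x_{T-1}) \;=\; \ex\bigl[-b_T x_T^2 + a_T x_T \,\big|\, x_{T-1}\bigr]
\]
with $x_T = e_T x_{T-1} + \bP_T^\prime \bu_T$. Saying that $f$ is concave in $\bu_T$ ``for fixed $x_{T-1}$'' is therefore not quite enough to invoke the version you proved. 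The fix is immediate: because $x_T$ is affine in $(\bu_T,x_{T-1})$ and $-b_T x_T^2 + a_T x_T$ is concave in $x_T$, the objective is \emph{jointly} concave in $(\bu_T,x_{T-1})$, and your same convex-combination argument then goes through with $f(u_\lambda,x_\lambda)\le \lambda f(u_1^\ast,x_1)+(1-\lambda)f(u_2^\ast,x_2)$ replacing the step where you used convexity of $f$ in $u$ alone. You should state this joint-concavity hypothesis explicitly when you connect the theorem back to $J_T^\ast$.
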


Next, proceeding to period $T-1$ with an initial amount of $x_{T-2}$ resources, the objective can be expressed as
\begin{equation*}
	\max_{\bu_{T-1}} \quad a_{T-1} \ex[x_{T-1}]-b_{T-1} \ex[x^2_{T-1}] + \ex[J_{T}^\ast(x_{T-1})] .
\end{equation*}
We can write the objective function at time $T-1$ in the following nominal form
\begin{align*}
	 {\hat a}_{T-1} x_{T-1} -{\hat b}_{T-1} x^2_{T-1} + {\hat \gamma}_{T-1}  \ex[\bP^\prime_{T}]\ex^{-1}[\bP_{T}\bP^\prime_{T}]\ex[\bP_{T}]
\end{align*}
with
\begin{align*}
{\hat a}_{T-1} & = a_{T-1}+a_{T} \Big[ \ex[e_{T}]-\Big(1-\frac{1}{n}A_{T}\Big)\ex[\bP^\prime_{T}]  \ex^{-1}[\bP_{T}\bP^\prime_{T}]\ex[e_{T}\bP^\prime_{T}] \Big], \\
{\hat b}_{T-1} & = b_{T-1}+b_{T} \Big[ \ex[e^2_{T}]-2\Big(1-\frac{1}{n}A_{T}\Big)\ex[e_{T}\bP^\prime_{T}]  \ex^{-1}[\bP_{T}\bP^\prime_{T}]\ex[e_{T}\bP^\prime_{T}] -A_{T} \Big], \\
{\hat\gamma}_{T-1} & =\Big(1-\frac{1}{n}A_{T}\Big)\frac{a_T^2}{4b_{T}} .
\end{align*}
Once again, we follow the same approach as presented above.
First, we obtain the derivatives with respect to the allocation $\bu_{T-1}$ at beginning of time period $T-1$ and solve for the stationary point.
Note that the derivatives will be piecewise linear functions of $\bu_{T-1}$.
Then we check whether the allocation exceeds the initial amount of available resources;
whenever this is the case, we introduce the Lagrangian multiplier and obtain the corresponding optimal allocation.

This method can be carried out for any time period.
In general, for time period $t$, the objective will be
\begin{align*}
	 {\hat a}_tx_t -{\hat b}_t x_t^2 + \sum_{s=t}^{T-1} {\hat \gamma}_t  \ex[\bP^\prime_{t+1}]\ex^{-1}[\bP_{t+1}\bP^\prime_{t+1}]\ex[\bP_{t+1}]
\end{align*}
with
\begin{align*}
{\hat a}_{t}  = & a_t+  a_{t+1} \Big[ \ex[e_{t+1}]-\Big(1-\frac{1}{n}A_{t+1}\Big)  \ex[\bP^\prime_{t+1}]\ex^{-1}[\bP_{t+1}\bP^\prime_{t+1}]\ex[e_{t+1}\bP^\prime_{t+1}] \Big], \\
{\hat b}_{t} = & b_t+ b_{t+1}\Big[\ex[ e^2_{t+1}]-2\Big(1-\frac{1}{n}A_{t+1}\Big)\ex[e_{t+1}\bP^\prime_{t+1}] \\ & \times \ex^{-1}[\bP_{t+1}\bP^\prime_{t+1}]\ex[e_{t+1}\bP^\prime_{t+1}] -A_{t+1} \Big], \\
{\hat \gamma}_{t}  = &\Big(1-\frac{1}{n}A_{t+1}\Big)\frac{{\hat a}_{t+1}^2}{4{\hat b}_{t+1}},
\end{align*}
and
$$
	A_{t} \; = \;  \indic \big\{ \ex^{-1}[\bP_{t}\bP^\prime_{t}] \big[{\hat a}_t/(2{\hat b}_t)\ex[\bP_{t}] -(\ex[e_{t}\bP^\prime_{t}]+{\bf 1})x_{t-1}\big]\ge 0\big\}.
$$
The optimal allocation then takes the form
\begin{align*}
\bu_{t}^\ast  = & \ex^{-1}[\bP_{t}\bP^\prime_{t}] \bigg[\frac{{\hat a}_{t+1}}{2{\hat b}_{t+1}}\ex[\bP_{t}]-\ex[e_{t}\bP^\prime_{t}]x_{t}\bigg] \\ &- \frac{1}{n}\bigg\{\ex^{-1}[\bP_{t}\bP^\prime_{t}] \left[\frac{{\hat a}_{t+1}}{2{\hat b}_{t+1}}\ex[\bP_{t}]-\ex[e_{t}\bP^\prime_{t}]x_{t}\bigg]-x_{t}\right\}^+ .
\end{align*}

Another important fact is that at time $t$,  the objective function will be the joint combination of at most $T-t+1$ quadratic functions.
As seen at each step of the backward induction, we are introducing at most one break point, which increases the number of quadratic functions by at most one.
This yields the following result.
\begin{pro}
The objective function at time $t$ is a piecewise quadratic function with at most $T-t+1$ pieces.
\end{pro}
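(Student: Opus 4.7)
The plan is to prove the proposition by backward induction on $t$, from $t = T$ down to $t = 1$. The central observation, already visible in the derivation leading up to the statement, is that each backward step introduces at most one additional breakpoint in the state variable $x_{t-1}$, generated by the activation threshold of the knapsack constraint ${\bf 1}^\prime \bu_t \le x_{t-1}$.

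\textbf{Base case.} At $t = T$, the optimal value $J_T^\ast(x_{T-1})$ is given by the explicit expression derived earlier; the indicator $A_T$ partitions the axis of $x_{T-1}$ into two regions on each of which $J_T^\ast$ reduces to a quadratic polynomial in $x_{T-1}$. This establishes piecewise quadraticity and anchors the piece count at the first step of the recursion.

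\textbf{Inductive step.} Assume $J_{t+1}^\ast(x_t)$ is piecewise quadratic in $x_t$ with at most $T - t$ pieces. At time $t$, the value function is obtained by maximizing, over $\bu_t$ subject to ${\bf 1}^\prime \bu_t \le x_{t-1}$, the expression
\begin{align*}
a_t \ex[x_t] - b_t \ex[x_t^2] + \ex[J_{t+1}^\ast(x_t)],
\end{align*}
with $x_t = e_t x_{t-1} + \bP_t^\prime \bu_t$. Since $\bu_t \mapsto x_t$ is affine and each piece of $J_{t+1}^\ast$ is quadratic, the inductive contribution $\ex[J_{t+1}^\ast(x_t)]$ is a pullback of piecewise quadratic forms through an affine map; combined with the quadratic $a_t \ex[x_t] - b_t \ex[x_t^2]$ term, the objective is piecewise quadratic in $(\bu_t, x_{t-1})$ on the same partition. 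First I would compute the first-order condition in $\bu_t$ to obtain an unconstrained stationary point $\bu_t^{\circ}(x_{t-1})$ that is affine in $x_{t-1}$. Next, checking the knapsack constraint and introducing a Lagrange multiplier when it binds, exactly as in the main derivation, yields the optimal $\bu_t^\ast(x_{t-1})$ as a piecewise affine function of $x_{t-1}$ with one extra breakpoint at the threshold where the unconstrained allocation just saturates $x_{t-1}$. Substituting $\bu_t^\ast(x_{t-1})$ back into the quadratic objective gives $J_t^\ast(x_{t-1})$ as a piecewise quadratic function of $x_{t-1}$ with at most $(T - t) + 1 = T - t + 1$ pieces, closing the induction.

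\textbf{Main obstacle.} The delicate point is justifying that $\ex[J_{t+1}^\ast(x_t)]$ retains a tractable piecewise quadratic form in $(x_{t-1}, \bu_t)$, because the indicators $A_{s}$ encoded in $J_{t+1}^\ast$ depend on the random $x_t$, and integrating such indicators against a continuous distribution of $(e_t, \bP_t)$ generally smooths them into non-piecewise functions. The proposition must therefore be read in the same sense used in the paper's formulas for ${\hat a}_t$, ${\hat b}_t$, and $A_t$, where the regime indicators are deterministic functions of the realized state at each stage; under this reading the combinatorial partition is tracked along the trajectory, the expectation merely aggregates coefficients within each regime, and no breakpoints beyond the single one contributed by the knapsack constraint at stage $t$ are manufactured. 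Making this reading precise (for instance via a conditional-on-regime decomposition of the expectation) is where the bulk of the technical care lies.
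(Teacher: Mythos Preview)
Your approach---backward induction with at most one breakpoint added per step from the knapsack constraint---is exactly the paper's argument; in fact the paper's entire proof is the single sentence preceding the proposition (``at each step of the backward induction, we are introducing at most one break point''), so your proposal is a considerably more careful elaboration of the same idea.

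One indexing point is worth tightening. Your base case analyses $J_T^\ast(x_{T-1})$ and finds two pieces, but your inductive hypothesis at $t=T-1$ assumes $J_T^\ast$ has at most $T-(T-1)=1$ piece, which is inconsistent. The resolution is that the proposition (and the paper's usage of ``objective function at time $t$'') refers to the \emph{pre-optimization} objective to be maximized over $\bu_t$---at $t=T$ this is the single quadratic $a_T\ex[x_T]-b_T\ex[x_T^2]$, i.e., one piece---not the post-optimization value function $J_t^\ast$. Shifting your induction to track the pre-optimization objective (equivalently, tracking $J_t^\ast$ with the bound $T-t+2$) fixes the off-by-one.

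Your ``main obstacle'' is well spotted and is genuinely glossed over in the paper: taking $\ex[J_{t+1}^\ast(e_t x_{t-1}+\bP_t^\prime\bu_t)]$ over continuous $(e_t,\bP_t)$ would in general smear out the breakpoints of $J_{t+1}^\ast$ and destroy piecewise quadraticity. Your proposed reading---that the indicators $A_s$ are treated as deterministic regime labels fixed by the realized state trajectory, so expectations act only on the quadratic coefficients within a regime---is precisely the convention implicit in the paper's formulas for ${\hat a}_t$, ${\hat b}_t$, ${\hat\gamma}_t$, and it is the only interpretation under which the proposition holds as stated.
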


Our algorithmic framework for solving the stochastic dynamic programming problem can be summarized as follows:
\begin{itemize}
	\item[] At the beginning of period $t$, the initial amount of resources is $x_{t-1}$ and the optimal allocation of resources is given by the solution of the convex programming problem for period $t$.  Moreover, a function $f_t(x_{t-1})$ is obtained as the localization of the objective function at period $t$, where this function is increasing and concave.
\end{itemize}

\subsection{Extensions on Resource Flexibility}
Our general solution framework thus far has strictly enforced the available resource constraint $x_{t}$ on allocation decisions at any time $t$.
We now relax this constraint and consider the more general case that allows additional resources to be acquired at a per-unit cost of ${\tilde c}_t$.
While this introduces a more complex cost structure, the formulation and solution gain more flexibility and agility in allocating resources for each time period.
We therefore adapt our algorithmic framework to address the following objective function at each time $t$
$${\tilde a}_tx_t -{\tilde b}_t x_t^2 - {\tilde c}_t[{\bf 1}'\bu_t -x_{t-1}]^+  + \sum_{s=t}^{T-1} {\tilde \gamma}_t  \ex[\bP^\prime_{t+1}]\ex^{-1}[\bP_{t+1}\bP^\prime_{t+1}]\ex[\bP_{t+1}] ,
$$
where 
\begin{align*}
{\tilde a}_{t}  = & a_t+  a_{t+1} \Big\{ \ex[e_{t+1}]-(1-{\tilde c}_tA_{t+1})  \ex[\bP^\prime_{t+1}]\ex^{-1}[\bP_{t+1}\bP^\prime_{t+1}]\ex[e_{t+1}\bP^\prime_{t+1}] \Big\}, \\
{\tilde b}_{t} = & b_t+ b_{t+1}\Big\{\ex[ e^2_{t+1}]-2(1-{\tilde c}_tA_{t+1})\ex[e_{t+1}\bP^\prime_{t+1}] \\ & \times \ex^{-1}[\bP_{t+1}\bP^\prime_{t+1}]\ex[e_{t+1}\bP^\prime_{t+1}] -A_{t+1} \Big\}\, \\
{\tilde \gamma}_{t}  = &(1-{\tilde c}_tA_{t+1})\frac{{\tilde a}_{t+1}^2}{4{\tilde b}_{t+1}} .
\end{align*}
The optimal solution then takes the analogous form
\begin{align*}
\bu_{t}^\ast  = & \ex^{-1}[\bP_{t}\bP^\prime_{t}] \bigg[\frac{{\tilde a}_{t+1}}{2{\tilde b}_{t+1}}\ex[\bP_{t}]-\ex[e_{t}\bP^\prime_{t}]x_{t}\bigg].
\end{align*}

\section{Extensions on General Formulations}
\label{sec:extensions}
The stochastic dynamic programming problem considered above is most related to the various real-world applications motivating this study.
Our algorithmic solution framework derived herein, however, is quite general and can be applied to a much wider range of stochastic dynamic program formulations,
including those in which different objective functions are used to reflect distinct aspects of the problem.

One performance criteria often used to measure the success of entities of interest concerns the likelihood of achieving certain pre-specified targets.
This class of problems can be readily modeled by a chance constraint, or equivalently by an objective function with a chance element.
More specifically, we can have the alternative objective function
\begin{align*}
\max_{(u_1,\ldots,u_T)} \quad & \sum_{t=1}^T w_t\pr[x_t > d_t] ,
\end{align*}
where the $d_t$ are any type of target return values that the decision makers usually consider to be a primary indicator of the progress and success for each of the various entities,
and the $w_t$ are user-specified weights that differentiate the importance of each chance element over time.
From Tchebyshev's inequality, we know that, when $x_t$ is large,
\begin{align*}
\pr[x_t> d_t]\le \frac{\var[x_t]}{(d_t-\ex[x_t])^2} ,
\end{align*}
and therefore we can again obtain a proper weighted objective function of the mean and variance of $x_t$ to formulate an equivalent problem to which our solution framework applies.

More generally, variations of our algorithmic solution framework can be applied to address the following generic class of dynamic utility maximization problems:
\begin{align*}
\max_{(u_1,\ldots,u_T)} \quad & \sum_{t=1}^T U_t(\ex[x_t], \var[x_t])
\end{align*}
where the $U_t(\cdot, \cdot)$ represent various utility functions of the mean and variance of the returns which exhibit properties that are increasing in $\ex[x_t]$ and decreasing in $\var[x_t]$.
This includes a wide range of risk measures, especially those exhibiting convexity, such as the entire class of value-at-risk measures.
More recent measures such as least partial moments can also be incorporated within our general approach.

Lastly, there are many studies related to continuous time mean-variance portfolio analysis that are also of interest.
Our algorithmic solution framework derived herein can be similarly applied to address this general class of problems.

\section{Conclusions}

We devised a general solution framework to support the dynamic allocation of generic resources among generic entities of interest over a finite horizon during which entities generate random returns.
With a primary objective to maximize the expected return while restricting the risk to a manageable level for each period,
we derived an algorithmic solution framework for determining the optimal portfolio of resource allocations.
Our general solution framework can be applied to address the planning and management of entities of interest across a broad spectrum of application domains.

\end{document}